\theoremstyle{plain}
\newtheorem{theorem}{Theorem}
\newtheorem{lemma}[theorem]{Lemma}
\newtheorem{proposition}[theorem]{Proposition}
\theoremstyle{definition}
\newtheorem{definition}{Definition}[section]
\theoremstyle{remark}
\begin{document}
\title[$\eta$-Ricci Soliton and gradient $\rho$-Einstein soliton]{Some results on $\eta$-Ricci Soliton and gradient $\rho$-Einstein soliton in a complete Riemannian manifold}
\author[A. A. Shaikh C. K. Mondal]{Absos Ali Shaikh$^{1*}$ and Chandan Kumar Mondal$^2$}

\begin{abstract}
The main purpose of the paper is to prove that if a compact Riemannian manifold admits a gradient $\rho$-Einstein soliton such that the gradient Einstein potential is a non-trivial conformal vector field, then the manifold is isometric to the Euclidean sphere. We have showed that a Riemannian manifold satisfying gradient $\rho$-Einstein soliton with convex Einstein potential possesses non-negative scalar curvature. We have also deduced a sufficient condition for a Riemannian manifold to be compact which satisfies almost $\eta$-Ricci soliton (see Theorem 2).
\end{abstract}
\noindent\footnotetext{ $^*$ Corresponding author.\\
$\mathbf{2010}$\hspace{5pt}Mathematics\; Subject\; Classification: 53C15, 53C21, 53C44, 58E20, 58J05.\\ 
{Key words and phrases: Gradient $\rho$-Einstein soliton, almost $\eta$-Ricci soliton, Hodge-de Rham potential, Einstein potential, convex function, harmonic function, conformal vector field.} }
\maketitle
\section{\textbf{Introduction}}
In 1982, Hamilton \cite{HAM82} introduced the notion of Ricci flow in a Riemannian manifold $(M,g_0)$ to find the various geometric and topological structures of Riemannian manifolds. The Ricci flow is defined by an evolution equation for metrics on $(M,g_0)$:
$$\frac{\partial}{\partial_t}g(t)=-2Ric,\quad g(0)=g_0.$$
A Ricci soliton on a Riemannian manifold $(M,g)$ is a generalization of Einstein metric and is defined as 
\begin{equation}\label{ieq5}
Ric+\frac{1}{2}\mathcal{L}_Xg=\lambda g,
\end{equation}
where $X$ is a smooth vector field on $M$, $\mathcal{L}$ denotes the Lie-derivative operator  and $\lambda\in\mathbb{R}$. It is observed that Ricci solitons are self-similar solutions to the Ricci flow. Ricci soliton is called shrinking,  steady or expanding according as $\lambda>0$, $\lambda=0$ or $\lambda<0$, respectively. The vector field $X$ is called the potential vector field of the Ricci soliton. If $X$ is either Killing or vanishing vector field, then Ricci soliton is called trivial Ricci soliton and (\ref{ieq5}) reduces to an Einstein metric. If $X$ becomes the gradient of a smooth function $f\in C^\infty(M)$, the ring of smooth functions on $M$, then the Ricci soliton is called gradient Ricci soliton and (\ref{ieq5}) reduces to the form
\begin{equation}\label{eq8}
Ric+\nabla^2f=\lambda g,
\end{equation}
where $\nabla^2f$ is the Hessian of $f$.
Perelman \cite{PE02} showed that Ricci soliton on any complete manifold is always a gradient Ricci soliton. If we replace the constant $\lambda$ in (\ref{ieq5}) with a smooth function $\lambda\in C^\infty(M)$, called soliton function, then we say that $(M,g)$ is an almost Ricci soliton, see (  \cite{BR11}, \cite{BR13}, \cite{PRRS11}).\\
\indent Almost gradient Ricci soliton motivated Catino \cite{CAT12} to introduce a new class of Riemannian metrics which are natural generalization of Einstein metrics. In particular, a Riemannian manifold $(M^n,g)$, $n\geq 2$, is called a generalized quasi-Einstein manifold if there are smooth functions $f,\lambda$ and $\mu$ on $M$ such that 
$$Ric+\nabla^2f=\lambda g+\mu df\otimes df.$$
Cho and Kimura \cite{CK09} further generalized the notion of Ricci soliton and developed the concept of $\eta$-Ricci soliton. If a Riemannian manifold $M$ satisfies
$$Ric+\frac{1}{2}\mathcal{L}_Xg=\lambda g+\mu \eta\otimes\eta,$$
for some constant $\lambda$ and $\mu$, then $M$ is said to admit $\eta$-Ricci soliton with soliton vector field $X$.
 A further generalization is the notion of almost $\eta$-Ricci soliton defined by Blaga \cite{BLA}.  
 \begin{definition}\cite{BLA}
 A complete Riemannian manifold $(M,g)$ is said to satisfy almost $\eta$-Ricci soliton if there exists a smooth vector field $X\in\chi(M)$, the algebra of smooth vector fields on $M$, such that
 \begin{equation}\label{eq1}
 Ric+\frac{1}{2}\mathcal{L}_Xg=\lambda g+\mu \eta\otimes\eta,
 \end{equation}
 where $\lambda$ and $\mu$ are smooth functions on $M$ and $\eta$ is an 1-form on $M$. 
 \end{definition}
 \noindent  If $X$ is the gradient of $f\in C^\infty(M)$, then $(M,g)$ is called a gradient almost $\eta$-Ricci soliton. Hence (\ref{eq1}) reduces to the form
\begin{equation}\label{eta1}
Ric+\nabla^2f=\lambda g+\mu\eta\otimes\eta.
\end{equation}
Instead of Ricci flow, Catino and Mazzieri \cite{CAT16} considered the following gradient flow
\begin{equation}
\frac{\partial}{\partial_t}g(t)=-2(Ric-\frac{1}{2}Rg),
\end{equation} 
and introduced the concept of gradient Einstein soliton in a Riemannian manifold, where $R$ is the scalar curvature of the manifold.
\begin{definition}\cite{CAT16}
A Riemannian manifold $(M,g)$ of dimension $n$ is said to be the gradient Einstein Ricci soliton if 
$$Ric-\frac{1}{2}Rg+\nabla^2f=\lambda g,$$
for some function $f\in C^\infty(M)$ and some constant $\lambda\in \mathbb{R}$.
\end{definition}
A more general type gradient Einstein soliton has been deduced by considering the following Ricci-Bourguignon flows \cite{CCDMM17}:
$$ \frac{\partial}{\partial_t}g(t)=-2(Ric-\rho Rg),$$
where $\rho$ is a real non-zero constant.
\begin{definition}\cite{CAT16}
A Riemannian manifold $(M,g)$ of dimension $n \geq 3$ is said to be the gradient $\rho$-Einstein Ricci soliton if 
$$Ric+\nabla^2f=\lambda g+\rho Rg,\quad \rho\in \mathbb{R},\ \rho\neq 0,$$
for some function $f\in C^\infty(M)$ and some constant $\lambda\in \mathbb{R}$. The function $f$ is called Einstein potential. The gradient $\rho$-Einstein soliton is called expanding if $\lambda<0$, steady if $\lambda=0$ and shrinking if $\lambda>0$.
\end{definition}
\indent The paper is arranged as follows: Section 2 discusses some basic concepts of Riemannian
manifold and some definitions, which are needed for the rest of the paper. Section 3
deals with the study of almost $\eta$-Ricci soliton in a complete Riemannian manifold and it is shown that in a compact manifold the potential of such soliton turns into the Hodge-de Rham potential, upto a constant. In this section we have also deduced a sufficient condition for a Riemannian manifold admitting almost $\eta$-Ricci soliton to be compact. In the last section as the main result of the paper we have proved that a compact Riemannian manifold satisfying a gradient $\rho$-Einstein soliton with gradient of Einstein potential as a conformal vector field, is isometric to the Euclidean sphere. We have also studied some properties of gradient $\rho$-Einstein soliton in a complete Riemannian manifold. Among others it is proved that if $(M,g)$ is a compact gradient $\rho$-Einstein soliton with $\rho$ as non-positive real number and gradient of the Einstein potential is a conformal vector field, then such soliton can never be expanding.
\section{\textbf{Preliminaries}}
Throughout this paper by $M$ we mean a complete Riemannian manifold of dimension $n$ endowed with some positive definite metric $g$ unless otherwise stated. In this section we have discussed some rudimentary facts of $M$ (for reference see \cite{PE06}). The tangent space at the point $p\in M$ is denoted by $T_pM$. The geodesic with initial point $p$ and final point $q$ is denoted by $\gamma_{pq}$. A smooth section of the thangent bundle $TM$ is called smooth vector field. The gradient of a smooth function $u:M\rightarrow\mathbb{R}$ at the point $p\in M$ is defined by $\nabla u(p)=g^{ij}\frac{\partial u}{\partial x^j}\frac{\partial}{\partial x^i}\mid_p.$ It is the unique vector field such that any smooth vector field $X$ in $M$ satisfies $g( \nabla u,X)=X(u)$. The Hessian $Hess(u)$ is the symmetric $(0,2)$-tensor field and is defined by $\nabla^2u(X,Y)=Hess(u)(X,Y)=g(\nabla_X\nabla u,Y)$ for all smooth vector fields $X,Y$ of $M$. In local coordinates this can be written as
$$(\nabla^2u)_{ij}=\partial_{ij}u-\Gamma^k_{ij}\partial_ku,$$
where $\Gamma^k_{ij}$ is the Christoffel symbol of $g$. For any vector field $X\in\chi(M)$ and a covariant tensor field $\omega$ of order $r$ on $M$, the Lie derivative of $\omega$ with respect to $X$ is defined by
$$(\mathcal{L}_X\omega)(X_1,\dots,X_r)=X(\omega(X_1,\dots,X_r))-\sum_{i=1}^{r}\omega(X_1,\dots,[X,X_i],\dots,X_n),$$
where $ X_i\in \chi(M)\text{ for }i=1,\dots,r.$ In particular, when $\omega=g$, then 
$$(\mathcal{L}_Xg)(Y,Z)=g(\nabla_YX,Z)+g(Y,\nabla_ZX) \text{ for }Y,Z\in \chi(M).$$
Given a vector field $X$, the divergence of $X$ is defined by
$$div(X)=\frac{1}{\sqrt{g}}\frac{\partial }{\partial x^j}\sqrt{g}X^j,$$
where $g=det(g_{ij})$ and $X=X^j\frac{\partial}{\partial x^j}$. The Laplacian of $u$ is defined by $\Delta u=div(\nabla u)$. 
\begin{definition}\cite{YAU75}
A $C^2$-function $u:M\rightarrow\mathbb{R}$ is said to be harmonic if $\Delta u=0$. The function $u$ is called subharmonic (resp. superharmonic) if $\Delta \geq 0$ (resp. $\Delta u\leq 0$), where $\Delta$ is the Laplacian operator in $M$.
\end{definition}
\begin{definition}\cite{UDR94}
 A function $u:M\rightarrow\mathbb{R}$ is called convex if the following inequality holds
\begin{equation*}
u\circ\gamma(t)\leq (1-t)u\circ\gamma(0)+tu\circ\gamma(1)\quad \forall t\in [0,1],
\end{equation*}
and for any geodesic $\gamma:[0,1]\rightarrow M$. And in case of $u$ is differentiable, then $u$ is convex if and only if $u$ satisfies
\begin{equation*}
g(\nabla u,X)_x\leq u(exp_x\nabla u)-u(x), \ \forall X\in T_xM.
\end{equation*}
\end{definition}

\section{\textbf{Some results of almost $\eta$-Ricci soliton in a compact Riemannian manifold}}
We consider $M$ as a compact orientable Riemannian manifold and $X\in\chi(M)$. Then Hodge-de Rham decomposition theorem \cite{ABR11} implies that $X$ can be expressed as 
$$X=\nabla h+Y,$$
where $h\in C^\infty(M)$ and $div(Y)=0$. The function $h$ is called the Hodge-de Rham potential \cite{BR11}.
\begin{theorem}
Let $(M,g, X,\lambda)$ be a compact gradient almost $\eta$-Ricci soliton. If $M$ is also
a gradient almost $\eta$-Ricci soliton with potential function $f$ then, up to a constant, $f$ equals to the Hodge-de Rham potential.
\end{theorem}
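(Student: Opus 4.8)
The plan is to extract from the two soliton equations a single tensorial relation between $\mathcal{L}_X g$ and the Hessian $\nabla^2 f$, and then reduce the whole matter to showing that one explicit function is harmonic. Since $(M,g,X,\lambda)$ is an almost $\eta$-Ricci soliton we have $Ric+\frac{1}{2}\mathcal{L}_X g=\lambda g+\mu\,\eta\otimes\eta$, while the gradient almost $\eta$-Ricci soliton with potential $f$ gives $Ric+\nabla^2 f=\lambda g+\mu\,\eta\otimes\eta$ with the same data $\lambda,\mu,\eta$. Subtracting these two identities cancels the curvature term together with the entire right-hand side, leaving the clean relation $\frac{1}{2}\mathcal{L}_X g=\nabla^2 f$ as symmetric $(0,2)$-tensor fields on $M$.

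First I would take the metric trace of this identity. Using the definition $(\mathcal{L}_X g)(Y,Z)=g(\nabla_Y X,Z)+g(Y,\nabla_Z X)$ recorded in Section 2, a computation in a local orthonormal frame gives $\operatorname{tr}_g(\mathcal{L}_X g)=2\,\operatorname{div}(X)$, whereas $\operatorname{tr}_g(\nabla^2 f)=\Delta f$. Hence the traced identity reads $\operatorname{div}(X)=\Delta f$.

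Next I would invoke the Hodge-de Rham decomposition $X=\nabla h+Y$ with $\operatorname{div}(Y)=0$, which is available precisely because $M$ is compact and orientable, and where $h$ is by definition the Hodge-de Rham potential. Taking divergence gives $\operatorname{div}(X)=\operatorname{div}(\nabla h)+\operatorname{div}(Y)=\Delta h$. Comparing this with the previous step yields $\Delta f=\Delta h$, i.e.\ $\Delta(f-h)=0$, so $f-h$ is a harmonic function on the compact manifold $M$.

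Finally, the standard fact that a harmonic function on a closed Riemannian manifold is constant — which follows at once from $\int_M |\nabla(f-h)|^2=-\int_M (f-h)\,\Delta(f-h)=0$ via the divergence theorem, forcing $\nabla(f-h)=0$ — shows that $f-h\equiv c$ for some constant $c$. Thus $f$ coincides with the Hodge-de Rham potential $h$ up to an additive constant, which is the assertion. The argument is largely mechanical; the only points needing care are the factor of $2$ in the trace of the Lie derivative and the requirement that the two soliton equations carry the same $\lambda,\mu,\eta$ so that the subtraction genuinely annihilates the right-hand side. The essential input is compactness (and orientability), entering through the vanishing of $\int_M \operatorname{div}(\cdot)$ that both underlies the Hodge-de Rham splitting and kills the harmonic function $f-h$.
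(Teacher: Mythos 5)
Your proposal is correct and follows essentially the same route as the paper: both arguments reduce to the traced identity $\operatorname{div}(X)=\Delta h=\Delta f$ via the Hodge--de Rham decomposition, conclude that $f-h$ is harmonic, and invoke compactness to make it constant. The only cosmetic difference is that you subtract the two soliton equations before tracing (which incidentally yields the stronger tensorial relation $\tfrac{1}{2}\mathcal{L}_X g=\nabla^2 f$), whereas the paper traces each equation separately and then compares.
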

\begin{proof}
Since $(M,g,X,\lambda)$ is a compact almost $\eta$-Ricci soliton, so taking trace of (\ref{eq1}), we get
$$R+div(X)=\lambda n+tr(\mu\eta\otimes\eta).$$
Now Hodge-de Rham decomposition implies that $div(X)=\Delta h$, hence from the above equation, we obtain
$$R=\lambda n-\Delta h+tr(\mu\eta\otimes\eta).$$
Again since $M$ is gradient almost $\eta$-Ricci soliton with Perelman potential $f$, hence taking trace of (\ref{eta1}), we have
$$R=\lambda n-\Delta f+tr(\mu\eta\otimes\eta).$$
equating the last two equations, we get $\Delta(f-h)=0$. Hence $f-h$ is a harmonic function in $M$, but $M$ is compact. Hence $f=h+c$, for some constant $c$.
\end{proof}
\begin{theorem}
Let $(M,g)$ be a complete Riemannian manifold satisfying
\begin{equation}\label{eq2}
Ric+\frac{1}{2}\mathcal{L}_g\geq \lambda g+\mu\eta\otimes\eta,
\end{equation}
where $X$ is a smooth vector field, $\mu$ and $\lambda$ are smooth functions and $\eta$ is an 1-form. Then $M$ is compact if $\|X\|$ is bounded and one of the following conditions holds:\\
(i) $\lambda\geq 0$ and $\mu>c>0$,\\
(ii) $\lambda> c>0$ and $\mu\geq 0,$ \\for some constant $c>0$.
\end{theorem}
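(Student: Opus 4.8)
The plan is to establish a Bonnet--Myers type diameter bound by contradiction: assume $M$ is noncompact, so that, being complete, it carries arbitrarily long minimizing geodesics (Hopf--Rinow), and then use the second variation of arc length together with (\ref{eq2}) to rule this out once the geodesic is long enough. The device that tames the vector field $X$ is the elementary identity along a unit-speed geodesic $\gamma$: since $\nabla_{\gamma'}\gamma'=0$, the Lie-derivative formula from the preliminaries gives
\begin{equation*}
\tfrac12(\mathcal{L}_Xg)(\gamma',\gamma')=g(\nabla_{\gamma'}X,\gamma')=\frac{d}{dt}\,g(X,\gamma'),
\end{equation*}
so the $X$-contribution is a total derivative along $\gamma$. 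Substituting $Y=\gamma'$ into (\ref{eq2}) therefore yields the pointwise bound
\begin{equation*}
Ric(\gamma',\gamma')\ \geq\ \lambda+\mu\,\eta(\gamma')^2-\frac{d}{dt}\,g(X,\gamma').
\end{equation*}

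Next I would run the standard index-form computation. Let $\gamma:[0,\ell]\to M$ be minimizing and $\{E_i\}_{i=1}^{n-1}$ a parallel orthonormal frame orthogonal to $\gamma'$. Testing the (nonnegative) index form with $V_i=\sin(\pi t/\ell)E_i$ and summing over $i$ gives
\begin{equation*}
(n-1)\frac{\pi^2}{2\ell}\ \geq\ \int_0^\ell \sin^2\!\Big(\frac{\pi t}{\ell}\Big)\,Ric(\gamma',\gamma')\,dt.
\end{equation*}
Inserting the lower bound for $Ric(\gamma',\gamma')$ and integrating the total-derivative term by parts (the boundary terms vanish since $\sin(\pi t/\ell)$ is zero at $t=0,\ell$) turns the $X$-term into $\int_0^\ell 2\,\frac{\pi}{\ell}\sin(\pi t/\ell)\cos(\pi t/\ell)\,g(X,\gamma')\,dt$. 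This is exactly where the hypothesis that $\|X\|$ is bounded, say $\|X\|\leq C$, is used: since $|g(X,\gamma')|\leq C$ and $\int_0^\ell \frac{\pi}{\ell}|\sin(\pi t/\ell)\cos(\pi t/\ell)|\,dt$ equals a constant independent of $\ell$, the whole term is bounded by a constant uniformly in $\ell$.

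The net outcome is an inequality of the shape
\begin{equation*}
(n-1)\frac{\pi^2}{2\ell}+2C\ \geq\ \int_0^\ell \sin^2\!\Big(\frac{\pi t}{\ell}\Big)\big(\lambda+\mu\,\eta(\gamma')^2\big)\,dt.
\end{equation*}
Under hypothesis (ii), with $\lambda\geq c>0$ and $\mu\geq 0$, the right-hand side is at least $c\int_0^\ell \sin^2(\pi t/\ell)\,dt=c\ell/2$, which grows linearly in $\ell$ while the left-hand side stays bounded --- impossible for $\ell$ large. Hence the diameter is bounded and, by Hopf--Rinow, $M$ is compact. (Equivalently, integrating $Ric(\gamma',\gamma')$ without the $\sin^2$ weight shows $\int_0^\infty Ric(\gamma',\gamma')\,dt=\infty$ along every ray, and Ambrose's theorem gives compactness directly.)

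I expect the real obstacle to be hypothesis (i), where positivity is carried by $\mu>c>0$ while $\lambda$ is only $\geq 0$. There the surviving lower bound is $c\int_0^\ell \sin^2(\pi t/\ell)\,\eta(\gamma')^2\,dt$, and for a general $1$-form $\eta$ the quantity $\eta(\gamma')$ can vanish identically along a geodesic whose direction stays $g$-orthogonal to $\eta^{\sharp}$, so this integral need not grow with $\ell$. Closing case (i) therefore seems to need extra control on $\eta$ along geodesics --- for instance that $\eta^{\sharp}$ be a nowhere-degenerate (say unit) field whose angle with the geodesic is bounded away from a right angle --- and reconciling this with the clean statement is the step on which I would spend the most care.
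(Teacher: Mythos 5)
Your argument is essentially the paper's: the authors also write $\tfrac12(\mathcal{L}_Xg)(\gamma',\gamma')=\frac{d}{dt}g(X,\gamma')$ along a unit-speed ray from a fixed point $p$, integrate the soliton inequality from $0$ to $T$ to get
$$\int_0^T Ric(\gamma',\gamma')\,dt\ \geq\ \int_0^T\lambda\,dt+g(X_p,\gamma'(0))-\|X_{\gamma(T)}\|+\int_0^T\mu\,\eta(\gamma')^2\,dt,$$
use the boundedness of $\|X\|$ to control the middle terms, conclude $\int_0^\infty Ric(\gamma',\gamma')\,dt=\infty$, and invoke Ambrose's theorem. So your parenthetical remark at the end is literally the paper's proof, and your index-form version is only a cosmetic variant; your treatment of case (ii) is correct and matches theirs.

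The point you flag about case (i) is not a weakness of your write-up that the paper repairs --- it is a genuine gap in the paper's own proof. Under hypothesis (i) the authors have only $\int_0^T\lambda\,dt\geq 0$ and $\int_0^T\mu\,\eta(\gamma')^2\,dt\geq c\int_0^T\eta(\gamma')^2\,dt\geq 0$, so the right-hand side above is merely bounded below by the constant $g(X_p,\gamma'(0))-\sup\|X\|$; nothing forces divergence of $\int_0^\infty Ric(\gamma',\gamma')\,dt$ along a ray for which $\eta(\gamma')\equiv 0$, exactly as you observe. The paper simply asserts that ``any one of the conditions $(i)$ and $(ii)$'' yields the divergence, without addressing this. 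Closing case (i) would require exactly the kind of extra hypothesis on $\eta$ you describe (e.g.\ a uniform positive lower bound on $\eta(\gamma')^2$ along rays, which already fails for a nowhere-zero $\eta$ whose kernel contains a ray direction), or a different argument altogether. So your proposal proves case (ii) completely, and your diagnosis of case (i) identifies a real defect rather than a missing idea on your part.
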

\begin{proof}
Let $p\in M$ be a fixed point and $\gamma:(0,\infty]\rightarrow M$ be a geodesic ray such that $\gamma(0)=p$. Then along $\gamma$ we calculate
$$\mathcal{L}_Xg(\gamma',\gamma')=2g(\nabla_{\gamma'}X,\gamma')=2\frac{d}{dt}[g(X,\gamma')].$$
Now from (\ref{eq2}) and above equation, we have
\begin{eqnarray*}
\int_{0}^{T}Ric(\gamma',\gamma')dt&\geq & \int_{0}^{T}\lambda(\gamma(t))g(\gamma',\gamma')dt-\int_{0}^{T}\frac{d}{dt}[g(X,\gamma')]dt+\int_{0}^{T}\mu(\gamma(t))(\eta\otimes\eta)(\gamma',\gamma')dt\\
&=& \int_{0}^{T}\lambda(\gamma(t))dt+g(X_p,\gamma'(0))-g(X_{\gamma(T)},\gamma'(T))+\int_{0}^{T}\mu(\gamma(t))\eta^2(\gamma')dt\\
&\geq & \int_{0}^{T}\lambda(\gamma(t))dt+g(X_p,\gamma'(0))-\|X_{\gamma(T)}\|+\int_{0}^{T}\mu(\gamma(t))\eta^2(\gamma')dt.
\end{eqnarray*}
The last inequality follows from Cauchy-Schwarz inequality. If any one of the conditions $(i)$ and $(ii)$ holds, then above inequality implies that
$$\int_{0}^{\infty}Ric(\gamma',\gamma')dt=\infty.$$
Hence Ambrose's compactness theorem \cite{AM57} implies that $M$ is compact.
\end{proof}
\section{\textbf{Gradient $\rho$-Einstein soliton in a compact Riemannian manifold}}
Throughout this section $M$ is a complete Riemannian manifold with dimension $n\geq 2$.
\begin{theorem}\cite{YAN70}\label{th1}
Suppose $(M,g)$ is a compact Riemannian manifold with constant scalar curvature and $M$ admits a non-trivial conformal vector field $X$. If $\mathcal{L}_XRic=\alpha g$ for some $\alpha\in C^\infty(M)$, then $M$ is isometric to the Euclidean sphere $\mathbb{S}^n$.
\end{theorem}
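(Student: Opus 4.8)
The plan is to convert the hypothesis $\mathcal{L}_X Ric = \alpha g$ into an Obata-type Hessian equation for the conformal potential and then invoke Obata's rigidity characterization of the round sphere; I will assume $n\geq 3$, since the factor $(n-2)$ below degenerates in dimension two (where conformal fields are abundant and no rigidity can hold). Because $X$ is conformal, write $\mathcal{L}_X g = 2\phi g$ with $\phi = \tfrac{1}{n}\,div(X)$, where non-triviality of $X$ (i.e. $X$ not Killing) means $\phi\not\equiv 0$. The first step is to recall the standard identity for the Lie derivative of the Ricci tensor along a conformal field, obtained by linearizing the conformal transformation law of $Ric$ at the identity and using that $Ric$ is natural under diffeomorphisms ($\mathcal{L}_X Ric = \tfrac{d}{dt}|_0\,Ric_{\varphi_t^* g}$ with $\varphi_t^* g = e^{2t\phi}g + O(t^2)$):
\begin{equation*}
\mathcal{L}_X Ric = -(n-2)\nabla^2\phi - (\Delta\phi)\, g.
\end{equation*}

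Combining this with the hypothesis $\mathcal{L}_X Ric = \alpha g$ immediately gives that the Hessian of $\phi$ is pointwise proportional to the metric, $(n-2)\nabla^2\phi = -(\Delta\phi + \alpha)\,g$. To pin down the proportionality I would feed in the constant scalar curvature assumption. Differentiating $R = g^{ij}Ric_{ij}$ along $X$ and using $\mathcal{L}_X g^{ij} = -2\phi g^{ij}$ yields $X(R) = -2\phi R + n\alpha$; since $R$ is constant, $X(R)=0$, forcing $\alpha = \tfrac{2}{n}\phi R$. Taking the trace of the Hessian relation then expresses the Laplacian as $\Delta\phi = -\tfrac{R}{n-1}\phi$, and substituting both $\alpha$ and $\Delta\phi$ back collapses the relation to the Obata equation
\begin{equation*}
\nabla^2\phi = -\frac{R}{n(n-1)}\,\phi\, g.
\end{equation*}

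It remains to fix the sign of $R$ and apply Obata's theorem. I would rule out $R\leq 0$ by integrating over the compact manifold: writing $\Delta\phi = k\phi$ with $k = -\tfrac{R}{n-1}\geq 0$, multiplying by $\phi$ and integrating gives $-\int_M|\nabla\phi|^2 = k\int_M\phi^2\geq 0$, which forces $\nabla\phi\equiv 0$, so $\phi$ is constant; then either $k>0$ gives $\phi\equiv 0$ directly, or $R=0$ gives $\phi$ constant with $\int_M\phi = \tfrac1n\int_M div(X)=0$, again $\phi\equiv 0$ — contradicting non-triviality of $X$. Hence $R>0$, the constant $c := \tfrac{R}{n(n-1)}$ is positive, and $\phi$ is a non-constant solution of $\nabla^2\phi = -c\phi\,g$. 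By the classical theorem of Obata (a complete manifold carrying a non-constant $\phi$ with $\nabla^2\phi = -c\phi\,g$, $c>0$, is isometric to the round sphere of radius $1/\sqrt{c}$), it follows that $(M,g)$ is isometric to the Euclidean sphere $\mathbb{S}^n$.

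The step I expect to be the main obstacle is the correct derivation and normalization of the conformal variation formula for $\mathcal{L}_X Ric$: every subsequent reduction hinges on the precise coefficient $(n-2)$ and on the trace bookkeeping, so a misplaced sign or factor there would destroy the clean passage to the Obata equation. By contrast, the integral argument ruling out $R\leq 0$ is routine once compactness and $\int_M div(X)=0$ are used, and the concluding appeal to Obata's rigidity is standard. The genuinely delicate part is the algebra that upgrades a Hessian merely \emph{proportional} to $g$ into one of the exact form $-c\phi\,g$ with $c$ a positive \emph{constant}, which is precisely where the constant scalar curvature hypothesis is consumed.
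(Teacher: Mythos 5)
Your proof is essentially correct, but note first that the paper itself offers no proof of this statement: it is imported verbatim from Yano's book \cite{YAN70} as a known rigidity theorem, so there is no internal argument to compare against. Your derivation is the standard modern route: the linearized conformal-variation formula $\mathcal{L}_X Ric=-(n-2)\nabla^2\phi-(\Delta\phi)g$ is correct (it checks on the round sphere, where $\phi=-h$ for a first spherical harmonic $h$ gives $-2(n-1)hg$ on both sides), the bookkeeping $\alpha=\tfrac{2}{n}\phi R$ and $\Delta\phi=-\tfrac{R}{n-1}\phi$ is right, and the reduction to the Obata equation $\nabla^2\phi=-\tfrac{R}{n(n-1)}\phi\,g$ goes through. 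Yano's own proof in the cited source proceeds instead through his integral formulas (bounding $\int_M \lvert \mathcal{L}_X Ric - \tfrac{1}{n}(\mathrm{tr}\,\mathcal{L}_X Ric)g\rvert^2$ type quantities and characterizing the equality case), which avoids invoking Obata but is computationally heavier; your appeal to Obata is cleaner and buys a one-line endgame. Two small points you should tighten: (i) when you finally apply Obata you need $\phi$ \emph{non-constant}, not merely $\phi\not\equiv 0$; this follows from $\int_M\phi\,dV=\tfrac{1}{n}\int_M div(X)\,dV=0$, which you use only in the $R=0$ branch but should cite again here; (ii) your argument genuinely requires $n\geq 3$ because of the division by $n-2$, whereas the section of the paper allows $n\geq 2$ --- this is harmless for the paper's application (gradient $\rho$-Einstein solitons are defined only for $n\geq 3$), but it is a real restriction of your proof relative to the stated theorem and worth flagging as such rather than only in passing.
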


Let $(M,g)$ be a gradient $\rho$-Einstein soliton. Then
$$Ric+\nabla^2f=\rho Rg+\lambda g.$$ 
If $\nabla f$ is conformal vector field, then $\nabla^2f=\psi g$, for some $\psi\in C^\infty(M)$. Hence above equation reduces to the form
\begin{equation}\label{eq6}
Ric=(\rho R+\lambda-\psi)g.
\end{equation}
Hence Ricci curvature depends only on the points of $M$. Then it follows from Schur's lemma that $R$ is constant.
Again by taking $X=\nabla f$, we have
$$\mathcal{L}_{X}Ric=(\rho R+\lambda-\psi)\mathcal{L}_{X}g=(\rho R+\lambda-\psi)\psi g.$$
Hence from Theorem \ref{th1} we can state the main theorem of the paper:
\begin{theorem}\label{th3}
Let $(M,g)$ be a compact gradient $\rho$-Einstein soliton with Einstein potential $f$. If $\nabla f$ is a non-trivial conformal vector field, then $M$ is isometric to the Euclidean sphere $\mathbb{S}^n$.
\end{theorem}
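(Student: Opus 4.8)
The plan is to reduce the statement entirely to Yano's rigidity result, Theorem \ref{th1}, whose hypotheses are (a) compactness, (b) constant scalar curvature, (c) the existence of a non-trivial conformal field $X$, and (d) $\mathcal{L}_X Ric = \alpha g$ for some $\alpha \in C^\infty(M)$. Compactness and the non-triviality and conformality of $X = \nabla f$ are given by assumption, so the work consists in producing conditions (b) and (d) from the soliton equation $Ric + \nabla^2 f = \rho R g + \lambda g$ together with the conformal hypothesis.

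First I would unpack the conformal assumption. Since $\mathcal{L}_{\nabla f} g (Y,Z) = 2\nabla^2 f(Y,Z)$, the field $\nabla f$ being conformal means $\nabla^2 f = \psi g$ for some $\psi \in C^\infty(M)$. Substituting this into the soliton equation collapses the Hessian term and yields the pointwise-Einstein identity $Ric = (\rho R + \lambda - \psi) g$, namely (\ref{eq6}). Thus $Ric$ is a scalar function times $g$ at every point. Invoking Schur's lemma---valid precisely because the definition of a gradient $\rho$-Einstein soliton requires $n \geq 3$---the contracted second Bianchi identity then forces the proportionality factor, and hence $R$, to be constant. This secures hypothesis (b).

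It remains to verify (d). Writing $\phi := \rho R + \lambda - \psi$ so that $Ric = \phi g$, for $X = \nabla f$ one computes $\mathcal{L}_X Ric = (X\phi)\, g + \phi\, \mathcal{L}_X g = (X\phi + 2\phi\psi)\, g$, which is manifestly of the form $\alpha g$ with $\alpha = X\phi + 2\phi\psi \in C^\infty(M)$; the key structural point is simply that the Lie derivative of a pure multiple of $g$ remains a pure multiple of $g$. With (a)--(d) now in hand, Theorem \ref{th1} applies and delivers that $M$ is isometric to $\mathbb{S}^n$.

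I expect the only genuine subtlety to be the Schur step: the implication $Ric = \phi g \Rightarrow R$ constant fails in dimension $2$, so one must lean on the standing assumption $n \geq 3$ built into the soliton definition. Everything else is a matter of correctly translating the conformal condition and then checking that the outputs of the soliton equation meet Yano's hypotheses verbatim; no delicate estimate or analytic argument is required, since the heavy lifting is carried out by the cited rigidity theorem.
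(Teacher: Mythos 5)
Your proposal is correct and follows essentially the same route as the paper: translate the conformal hypothesis into $\nabla^2 f=\psi g$, substitute into the soliton equation to get $Ric=(\rho R+\lambda-\psi)g$, invoke Schur's lemma for constancy of $R$, compute $\mathcal{L}_{\nabla f}Ric$ as a multiple of $g$, and conclude via Yano's theorem. In fact your Lie derivative computation is slightly more careful than the paper's, which drops the $(X\phi)\,g$ term arising from the non-constant factor $\psi$ (harmless, since that term is still a multiple of $g$).
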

\begin{theorem}\cite{YAN70}\label{th2}
If $M$ is compact with constant scalar curvature and admits a non-trivial conformal vector field $X$: $\mathcal{L}_Xg=2\psi g$, $\psi\neq 0$, then 
$$\int_M\psi dV=0.$$
\end{theorem}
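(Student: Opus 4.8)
The plan is to reduce the identity $\int_M \psi\, dV=0$ to the divergence theorem by extracting a pointwise relation between $\psi$ and $\mathrm{div}(X)$. First I would take the metric trace of the conformal equation $\mathcal{L}_Xg=2\psi g$. Using the formula recorded in Section 2, namely $(\mathcal{L}_Xg)(Y,Z)=g(\nabla_YX,Z)+g(Y,\nabla_ZX)$, and contracting with respect to a local orthonormal frame $\{e_i\}_{i=1}^n$, I would compute
$$\mathrm{tr}_g(\mathcal{L}_Xg)=\sum_{i=1}^n\big[g(\nabla_{e_i}X,e_i)+g(e_i,\nabla_{e_i}X)\big]=2\sum_{i=1}^n g(\nabla_{e_i}X,e_i)=2\,\mathrm{div}(X).$$
On the other hand, the trace of the right-hand side is $\mathrm{tr}_g(2\psi g)=2n\psi$, so equating the two traces yields the key pointwise identity $\mathrm{div}(X)=n\psi$, equivalently $\psi=\tfrac{1}{n}\mathrm{div}(X)$.

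Next I would integrate this identity over $M$. Since $M$ is compact (and has no boundary), the divergence theorem gives $\int_M \mathrm{div}(X)\,dV=0$ for any smooth vector field $X$. Combining with $\psi=\tfrac{1}{n}\mathrm{div}(X)$, I obtain
$$\int_M\psi\,dV=\frac{1}{n}\int_M\mathrm{div}(X)\,dV=0,$$
which is exactly the claimed conclusion.

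I do not expect a genuine obstacle here: the entire argument rests on the elementary observation that the $g$-trace of $\mathcal{L}_Xg$ is twice the divergence of $X$, after which compactness and the divergence theorem finish the proof immediately. It is worth noting that the hypotheses of constant scalar curvature and non-triviality of $X$ (i.e.\ $\psi\neq 0$) play no role in this particular integral identity; they are carried along from the statement of Theorem \ref{th1} but are not needed to conclude $\int_M\psi\,dV=0$. The only point requiring mild care is the sign and normalization conventions in the trace computation, which must be kept consistent with the definitions of $\mathcal{L}_Xg$ and $\mathrm{div}(X)$ given in the Preliminaries.
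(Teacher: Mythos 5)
Your proof is correct: tracing $\mathcal{L}_Xg=2\psi g$ against the metric gives the pointwise identity $\mathrm{div}(X)=n\psi$, and the divergence theorem on the closed manifold $M$ then yields $\int_M\psi\,dV=0$. The paper itself offers no proof of this statement --- it is quoted as a known result from Yano's book \cite{YAN70} --- so there is no in-paper argument to compare against; yours is the standard one, and your observation that neither the constant scalar curvature hypothesis nor the non-triviality of $X$ is actually used in deriving this particular integral identity is also accurate.
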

\noindent Taking the trace in (\ref{eq6}), we get
$$R=n(\rho R+\lambda-\psi),$$
which implies that
$$\int_M(1-n\rho)R=n\int_M(\lambda-\psi).$$
If $X$ is conformal vector field and $M$ is of constant scalar curvature, then applying Theorem \ref{th2} we get
\begin{equation}\label{eq5}
 R\int_M(1-n\rho)=n\int_M\lambda.
\end{equation}
Now if $\lambda<0$, then the above equation becomes
$$ R\int_M(1-n\rho)<0.$$
If $M$ is compact, then Theorem \ref{th3} implies that $M$ is isometric to $\mathbb{S}^n$. Since isometry preserves scalar curvature so $R>0$. Hence the above equation implies that
\begin{equation}
Vol(M)<n\int_M\rho.
\end{equation}
Hence we can state the following:
\begin{theorem}
Let $(M,g)$ be a compact gradient $\rho$-Einstein soliton with Einstein potential $f$ and $\rho\leq 0$. If $\nabla f$ is conformal vector field then $M$ is shrinking or steady gradient $\rho$-Einstein soliton.
\end{theorem}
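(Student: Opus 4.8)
The plan is to argue by contradiction: I would suppose that the soliton is expanding, i.e.\ $\lambda<0$, and show that the hypothesis $\rho\leq 0$ is then violated. All the analytic machinery needed is already in place from the discussion preceding the statement, so the argument is essentially a sign chase built on equation (\ref{eq5}). The crucial preliminary observations are that, under the conformal hypothesis $\nabla^2 f=\psi g$, equation (\ref{eq6}) shows $Ric$ is pointwise proportional to $g$, so Schur's lemma makes $R$ constant; and that in the definition of a gradient $\rho$-Einstein soliton both $\lambda$ and $\rho$ are genuine constants. Consequently the integrated identity (\ref{eq5}), $R\int_M(1-n\rho)\,dV=n\int_M\lambda\,dV$, reduces to the single scalar relation $R(1-n\rho)=n\lambda$ after dividing by $Vol(M)>0$.

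First I would invoke Theorem \ref{th3}: since $M$ is a compact gradient $\rho$-Einstein soliton whose potential gradient $\nabla f$ is a non-trivial conformal vector field, $M$ is isometric to $\mathbb{S}^n$, and because an isometry preserves scalar curvature while $\mathbb{S}^n$ has strictly positive scalar curvature, we obtain $R>0$. Next, the sign hypothesis $\rho\leq 0$ gives $1-n\rho\geq 1>0$, so the left-hand side of $R(1-n\rho)=n\lambda$ is strictly positive. This immediately forces $\lambda>0$, contradicting the assumed $\lambda<0$. Equivalently, following the route already laid out, the assumption $\lambda<0$ together with $R>0$ yields $Vol(M)<n\int_M\rho=n\rho\,Vol(M)$, whence $n\rho>1$ and thus $\rho>1/n>0$, again contradicting $\rho\leq 0$. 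Either way no expanding case survives, so $\lambda\geq 0$ and the soliton is shrinking or steady.

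The step I expect to require the most care is the passage from the integral identity to the pointwise scalar identity $R(1-n\rho)=n\lambda$: this hinges on $\lambda$ and $\rho$ being constants and on the constancy of $R$, the latter coming from (\ref{eq6}) via Schur's lemma and hence ultimately from the conformal hypothesis. A second point worth checking is the non-triviality of the conformal field, which Theorem \ref{th3} (and Theorem \ref{th2}) requires; should $\nabla f$ be Killing rather than properly conformal, then $\psi=0$, so $\nabla^2 f=0$ and on a compact manifold $f$ would be constant, a degenerate situation lying outside the intended scope of the hypothesis. Once these two matters are settled, the sign comparison closing the argument is immediate.
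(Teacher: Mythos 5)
Your argument is correct and follows essentially the same route as the paper: use Theorem \ref{th3} to conclude $M\cong\mathbb{S}^n$ and hence $R>0$, then combine the integrated identity (\ref{eq5}) with the sign hypothesis $\rho\leq 0$ to rule out $\lambda<0$. Your remark about the need for non-triviality of the conformal field (which Theorem \ref{th3} requires but the theorem statement omits) is a fair observation that the paper itself glosses over.
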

\begin{lemma}\cite{CAT16}
Let $(M,g)$ be gradient $\rho$-Einstein Ricci soliton with Einstein potential $f$. Then we have
\begin{equation}\label{eq3}
\Delta f=-(1-n\rho)R+n\lambda.
\end{equation}
\end{lemma}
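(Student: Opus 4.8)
The plan is to obtain the identity by taking the metric trace of the defining equation of a gradient $\rho$-Einstein soliton. Starting from
$$Ric+\nabla^2f=\lambda g+\rho Rg,$$
I would contract both sides with the inverse metric $g^{ij}$, equivalently compute the trace with respect to a local orthonormal frame $\{e_i\}_{i=1}^{n}$. No global or analytic input is needed; the statement is a pointwise algebraic consequence of the soliton equation.

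The three traces required are standard. The trace of the Ricci tensor is by definition the scalar curvature, $tr(Ric)=R$; the trace of the Hessian is the Laplacian, $tr(\nabla^2f)=\Delta f$, which is consistent with the definition $\Delta u=div(\nabla u)$ recorded in Section 2; and $tr(g)=g^{ij}g_{ij}=n$. Substituting these into the trace of the right-hand side gives
$$tr(\lambda g+\rho Rg)=\lambda\,tr(g)+\rho R\,tr(g)=n\lambda+n\rho R,$$
so that the traced soliton equation reads $R+\Delta f=n\lambda+n\rho R$.

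Solving for $\Delta f$ and collecting the curvature terms yields $\Delta f=n\lambda+n\rho R-R=-(1-n\rho)R+n\lambda$, which is precisely (\ref{eq3}). Since every step is the rearrangement of a single linear trace, there is no genuine obstacle here; the only point needing minimal care is keeping the factor $n$ correctly attached to the $\rho R$ term, because it is the coefficient $(1-n\rho)$, and not $(1-\rho)$, that propagates into the later computations such as (\ref{eq5}).
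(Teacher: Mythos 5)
Your proof is correct: tracing the soliton equation $Ric+\nabla^2f=\lambda g+\rho Rg$ with $tr(Ric)=R$, $tr(\nabla^2f)=\Delta f$, and $tr(g)=n$ gives exactly $\Delta f=-(1-n\rho)R+n\lambda$. The paper states this lemma without proof, citing \cite{CAT16}, and your one-line trace computation is the standard argument behind it, so there is nothing further to compare.
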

\begin{proposition}
Suppose $(M,g)$ is an expanding or steady gradient $\rho$-Einstein Ricci soliton with Einstein potential $f$ and $n\rho>1$. If $f$ is a convex function, then $M$ has non-negative scalar curvature.
\end{proposition}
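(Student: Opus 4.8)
The plan is to exploit the single scalar identity furnished by the preceding Lemma together with the analytic consequence of convexity, and the whole argument reduces to careful sign bookkeeping. First I would recall that convexity of $f$ is equivalent to the Hessian $\nabla^2 f$ being positive semi-definite at every point of $M$; since the metric trace of a positive semi-definite symmetric $(0,2)$-tensor is a non-negative scalar, this gives $\Delta f = \mathrm{tr}_g(\nabla^2 f) \geq 0$ throughout $M$. This is the only place where the hypothesis on $f$ enters, and it is the conceptual heart of the argument.

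Next I would feed this inequality into the trace identity (\ref{eq3}) supplied by the Lemma, namely $\Delta f = -(1-n\rho)R + n\lambda$. Combining $\Delta f \geq 0$ with this relation produces $-(1-n\rho)R + n\lambda \geq 0$, which I would rearrange to the form $(n\rho - 1)R \geq -n\lambda$.

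Finally I would track the signs. The assumption $n\rho > 1$ makes the coefficient $n\rho - 1$ strictly positive, so dividing by it preserves the inequality and yields $R \geq \frac{-n\lambda}{\,n\rho - 1\,}$. Because the soliton is expanding or steady we have $\lambda \leq 0$, so the numerator $-n\lambda$ is non-negative while the denominator is positive; hence the right-hand side is non-negative and $R \geq 0$ follows, as claimed.

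I do not anticipate a genuine obstacle here: once the implication ``$f$ convex $\Rightarrow$ $\nabla^2 f$ positive semi-definite $\Rightarrow$ $\Delta f \geq 0$'' is in hand, the remainder is elementary. The only point demanding care is fixing the direction of the final inequality, which hinges precisely on the interaction of the two structural hypotheses: $n\rho > 1$ guarantees a positive coefficient, and $\lambda \leq 0$ guarantees the favourable sign of the soliton constant. Dropping either assumption would break the sign chain and the conclusion could fail, so I would be explicit about where each is used.
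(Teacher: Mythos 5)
Your proposal is correct and follows essentially the same route as the paper: both arguments derive $\Delta f\geq 0$ from convexity (the paper cites Greene--Wu for subharmonicity, you argue via positive semi-definiteness of the Hessian, which amounts to the same thing for the smooth potential $f$), substitute into the trace identity (\ref{eq3}), and conclude by the same sign analysis using $n\rho>1$ and $\lambda\leq 0$.
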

\begin{proof}
The convexity of $f$ implies that $f$ is subharmonic \cite{GW71}, i.e., $\Delta f\geq 0$. Hence (\ref{eq3}) implies that
$$(1-n\rho)R-n\lambda\leq 0.$$
Now take $1-n\rho=-h$, where $h>0$ is a real constant. Then we get 
\begin{equation}\label{eq4}
R\geq -\frac{n\lambda}{h}.
\end{equation}
 Since $M$ is expanding or steady, so $\lambda\leq 0$. Hence we can conclude from (\ref{eq4}) that $R\geq 0$.
\end{proof}
The following can be easily derived from (\ref{eq3}):
\begin{proposition}
Suppose $(M,g)$ is a steady gradient $\rho$-Einstein Ricci soliton with Einstein potential $f$ and $n\rho>1$. If $f$ is a harmonic function, then the scalar curvature of $M$ vanishes.
\end{proposition}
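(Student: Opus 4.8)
The plan is to start from the master Laplacian identity (\ref{eq3}), which for any gradient $\rho$-Einstein soliton reads $\Delta f = -(1-n\rho)R + n\lambda$. The hypotheses of the proposition are that the soliton is steady, so $\lambda = 0$, and that $f$ is harmonic, so $\Delta f = 0$. Substituting both of these into (\ref{eq3}) collapses the identity to the single scalar equation $0 = -(1-n\rho)R$, i.e. $(1-n\rho)R = 0$.

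Next I would use the remaining hypothesis $n\rho > 1$ to clear the coefficient. Since $n\rho > 1$, the quantity $1 - n\rho$ is strictly negative and in particular nonzero, so it can be divided out. This forces $R = 0$ identically on $M$, which is exactly the assertion that the scalar curvature vanishes. In short, the proof is a direct specialization of (\ref{eq3}): the steady condition kills the $n\lambda$ term, the harmonic condition kills the $\Delta f$ term, and the nondegeneracy condition $n\rho \neq 1$ lets us conclude $R \equiv 0$.

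There is essentially no obstacle here, which is consistent with the paper's remark that the statement ``can be easily derived from (\ref{eq3})''. The only point that requires any care is ensuring the coefficient $1 - n\rho$ is genuinely nonzero before dividing; this is guaranteed by the strict inequality $n\rho > 1$, so the argument never risks dividing by zero. I would present the whole thing in two or three lines: quote (\ref{eq3}), impose $\lambda = 0$ and $\Delta f = 0$, and read off $R = 0$ from $(1-n\rho)R = 0$ together with $n\rho > 1$.
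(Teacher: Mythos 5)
Your proof is correct and is exactly the derivation the paper intends: it gives no explicit proof beyond remarking that the proposition follows easily from (\ref{eq3}), and your specialization ($\lambda=0$ from steadiness, $\Delta f=0$ from harmonicity, then dividing by the nonzero factor $1-n\rho$) is precisely that easy derivation. Nothing is missing.
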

Integrating (\ref{eq5}) on $M$, we get
$$R(1-n\rho)Vol(M)=n\lambda Vol(M),$$
which yields
$$R=\frac{n\lambda}{1-n\rho}.$$
If $R>0$, then $n\lambda>1-n\rho$, i.e., $\rho>\frac{1}{n}(1-n\lambda).$ Hence Theorem \ref{th3} implies that
\begin{proposition}
Let $(M,g)$ be a compact gradient $\rho$-Einstein soliton with Einstein potential $f$. If $\nabla f$ is a non-trivial conformal vector field, then $\rho$ satisfies
$$\rho>\frac{1}{n}(1-n\lambda).$$
\end{proposition}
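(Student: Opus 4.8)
The plan is to run the same chain the section has been building: use the conformality of $\nabla f$ to obtain a trace identity, integrate it to solve for $R$, and then read off the inequality on $\rho$ from the positivity of the scalar curvature supplied by Theorem \ref{th3}.

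First I would reduce the soliton equation using the conformal hypothesis. Writing $\nabla^2 f = \psi g$ turns the $\rho$-Einstein equation into (\ref{eq6}), and taking its trace gives $R(1-n\rho) = n(\lambda - \psi)$; this is the identity on which everything rests. Since $\nabla f$ is a non-trivial conformal field on a compact manifold with (by Schur's lemma, as noted after (\ref{eq6})) constant $R$, Theorem \ref{th2} applies and gives $\int_M \psi\, dV = 0$. Integrating the trace identity over $M$ and using that $R,\rho,\lambda$ are constants then yields $R(1-n\rho)\,\mathrm{Vol}(M) = n\lambda\,\mathrm{Vol}(M)$, i.e. (\ref{eq5}), and hence
$$R = \frac{n\lambda}{1-n\rho}.$$
As a cross-check I would note that integrating the Laplacian identity (\ref{eq3}) gives the same relation, since $\int_M \Delta f\, dV = 0$ on a compact manifold.

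Next I would invoke Theorem \ref{th3}: under precisely these hypotheses $M$ is isometric to $\mathbb{S}^n$, and since isometries preserve scalar curvature we get $R>0$. Substituting $R>0$ into the displayed relation forces $n\lambda$ and $1-n\rho$ to have the same sign, and clearing the denominator gives $n\lambda > 1 - n\rho$, which is exactly $\rho > \frac{1}{n}(1-n\lambda)$.

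The hard part will be this last rearrangement, which is not purely formal: deducing $n\lambda > 1-n\rho$ from $R = n\lambda/(1-n\rho)>0$ depends on the sign of the denominator $1-n\rho$. The way I would make it rigorous is to use more than bare positivity of $R$: on the round sphere $\mathbb{S}^n$ one has $R = n(n-1) \geq 2 > 1$, so when $1 - n\rho > 0$, dividing $R(1-n\rho) = n\lambda$ by the positive factor $1-n\rho$ upgrades $R>1$ to $n\lambda > 1-n\rho$ directly. Thus the genuine content of the proof is to control the sign of $1-n\rho$ (equivalently, to exclude the regime $n\rho > 1$) from the sphere geometry, after which the stated bound on $\rho$ is immediate.
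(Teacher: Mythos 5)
Your first two paragraphs reproduce the paper's argument exactly: trace (\ref{eq6}), apply Theorem \ref{th2} to remove $\int_M\psi$, arrive at $R=n\lambda/(1-n\rho)$, and get $R>0$ from Theorem \ref{th3}; your cross-check via (\ref{eq3}) is also valid. Where you diverge is the last step, and you have correctly put your finger on the weak point: from $R=n\lambda/(1-n\rho)>0$ one can only conclude that $n\lambda$ and $1-n\rho$ have the same sign, not that $n\lambda>1-n\rho$. The paper itself simply asserts ``If $R>0$, then $n\lambda>1-n\rho$'' with no justification, so the gap you identify is genuinely present in the paper's own proof.

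However, your proposed repair does not close that gap. In the case $1-n\rho>0$ you need $R>1$, which you extract from $R=n(n-1)$; but Theorem \ref{th3} only gives an isometry to a Euclidean sphere, and a sphere of radius $r$ has $R=n(n-1)/r^2$, which need not exceed $1$. More seriously, you explicitly defer the exclusion of the regime $1-n\rho<0$ (``the genuine content of the proof is to control the sign of $1-n\rho$'') and never carry it out. That regime cannot be dispatched by the same manipulation: since $n\lambda=R(1-n\rho)$, one has $n\lambda-(1-n\rho)=(1-n\rho)(R-1)$, so the desired inequality is equivalent to $1-n\rho$ and $R-1$ having the same sign. In particular, if $1-n\rho<0$ and $R>1$ the conclusion actually fails, so this case must be ruled out by some additional geometric input, which neither you nor the paper supplies. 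As written, your argument (like the paper's) proves the proposition only under the unstated extra hypotheses $n\rho<1$ and $R>1$.
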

\section*{\textbf{Acknowledgment}}
 The second author greatly acknowledges to The University Grants Commission, Government of India for the award of Junior Research Fellowship.

$\bigskip $

$^{1,2}$The University of Burdwan, 

Department of Mathematics,

 Golapbag, Burdwan-713104,
 
 West Bengal, India.
 
$^1$E-mail:aask2003@yahoo.co.in, aashaikh@math.buruniv.ac.in

$^2$E-mail:chan.alge@gmail.com

\end{document}